\numberwithin{equation}{section}
\numberwithin{figure}{section}
 \theoremstyle{definition}
 \newtheorem*{defn*}{\protect\definitionname}
\theoremstyle{plain}
\newtheorem{thm}{\protect\theoremname}
  \theoremstyle{plain}
  \newtheorem{prop}[thm]{\protect\propositionname}
  \theoremstyle{remark}
  \newtheorem{rem}[thm]{\protect\remarkname}
  \theoremstyle{definition}
  \newtheorem{example}[thm]{\protect\examplename}
  \theoremstyle{plain}
  \newtheorem{lem}[thm]{\protect\lemmaname}
  \providecommand{\definitionname}{Definition}
  \providecommand{\examplename}{Example}
  \providecommand{\lemmaname}{Lemma}
  \providecommand{\propositionname}{Proposition}
  \providecommand{\remarkname}{Remark}
\providecommand{\theoremname}{Theorem}
\begin{document}

\title{Omegas of Agemos in Powerful Groups}

\author{James Williams}
\begin{abstract}
In this note we show that for any powerful $p$-group $G$, the subgroup
$\Omega_{i}(G^{p^{j}})$ is powerfully nilpotent for all $i,j\geq1$
when $p$ is an odd prime, and $i\geq1$, $j\geq2$ when $p=2$. We
provide an example to show why this modification is needed in the
case $p=2$. Furthermore we obtain a bound on the powerful nilpotency
class of $\Omega_{i}(G^{p^{j}})$. 
\end{abstract}

\maketitle

\section*{Introduction}

It is well known that for a powerful $p$-group $G$, the $i$th Agemo
subgroup, $\mho_{i}(G)=G^{p^{i}}$, coincides with the set of $p^{i}$th
powers, and that this subgroup is itself powerful \cite[Corollary 1.2, Proposition 1.7]{Lubotzky1987}.
In \cite{Traustason2018} we introduced the notion of powerful nilpotence,
and showed that for a powerful $p$-group $G$ the groups $G^{p^{i}}$,
for $i\geq1$, are powerfully nilpotent. In some sense dual to the
Agemo subgroups are the Omega subgroups, $\Omega_{i}(G)$. For a powerful
$p$-group $G$ these Omega subgroups are studied in \cite{Fernandez-Alcober2007}. 

In \cite{Traustason2018} we observed how powerfully nilpotent groups
often occur as characteristic subgroups of powerful groups. For example
the proper terms of the derived and lower central series of a powerful
group $G$ are powerfully nilpotent. One aim of this paper is to further
motivate the study of the relationship between powerful groups and
the powerfully nilpotent groups within them, by showing another important
class of characteristic subgroups of powerful groups to be powerfully
nilpotent. 

Furthermore, in \cite{Traustason2018} it was proved that for a powerfully
nilpotent group $G$ of order $p^{n}$, rank $r$, exponent $p^{e}$
and powerful nilpotency class $c$, we have that $e\leq n-c+1$ and
$r\le n-c+1$. We call the quantity $n-c$ the powerful coclass of
$G$. It follows that there are only finitely many groups of any given
powerful coclass, and as such a weak classification of powerfully
nilpotent groups exists in terms of an ``ancestry tree'' \cite{Traustason2018}.
In \cite{article} the question is asked ``which $p$-groups are
subgroups of powerful $p$ -groups?'' and this was studied further
in \cite{Mann2003}. Thus in light of all of this it is interesting
to note which subgroups must be powerfully nilpotent.

By Theorem 1.1 in \cite{GONZALEZSANCHEZ2004193} we know that for a powerful $p$-group $G$, if $N \vartriangleleft G$ and $N \leq G^{p}$ then $N$ is powerful in the case where $p$ is an odd prime. Similarly in the even case if $N \vartriangleleft G$ and $N \leq G^{4}$ then $N$ is powerful.  It follows from this that  $\Omega_{i}(G^{p})$ is powerful for $p$ an odd prime, and in the even case that $\Omega_{i}(G^{4})$ is powerful. However in what follows we give an elementary proof of the fact that these Omega subgroups are powerful. In particular in this note we prove that for an odd prime $p$ and a powerful $p$-group
$G$, the Omega subgroups of any proper Agemo subgroup are powerfully
nilpotent (and hence powerful), and moreover we can obtain a bound
on the powerful nilpotency class. 

 \newtheorem*{thm:resultoddp}{Theorem \ref{thm: main result for p odd}} \begin{thm:resultoddp} Let $G$ be a powerful $p$-group for an odd prime $p$. Then $\Omega_{i}(G^{p^{j}})$ is powerfully nilpotent for $i,j\geq1$. The powerful nilpotency class of $\Omega_{i}(G^{p^{j}})$ is at most $i$.  \end{thm:resultoddp}

We obtain a similar result for $p=2$ with a small modification.

 \newtheorem*{thm:resultevenp}{Theorem \ref{thm:main result for p=00003D2}} \begin{thm:resultevenp} Let $G$ be a powerful $2$ group, then $\Omega_{i}(G^{2^{j}})$ is powerfully nilpotent for all $i\geq1,$ $j\geq2$. Furthermore for $i>1$ the powerful nilpotency class of $\Omega_{i}(G^{2^{j}})$ is at most $i-1$. For $i=1$ the powerful nilpotency class is $1$. \end{thm:resultevenp}

\section*{Preliminaries}

In this section we set up notation and terminology. For a group $G$,
we denote the centre of $G$ by $Z(G)$, the commutator subgroup of
$G$ by $G^{\prime}$, and $G^{n}$ denotes the subgroup generated
by all $n$th powers of elements of $G$. For a $p$-group $G$, the
group $G^{p^{i}}$ is sometimes denoted as $\mho_{i}(G)$ and known
as the $i$th Agemo subgroup of $G$. The $i$th Omega subgroup of
$G$, denoted $\Omega_{i}(G)$, is the subgroup generated by all elements
of $G$ whose order divides $p^{i}$. The exponent of $G$ is denoted
by $\exp G$.
\begin{defn*}
A finite $p$-group $G$ is termed \textbf{powerful} if $p>2$ and
$G^{\prime}\leq G^{p}$, or $p=2$ and $G^{\prime}\leq G^{4}$. 
\end{defn*}

\begin{defn*}
A powerful $p$-group $G$ is said to be \textbf{powerfully nilpotent}
if it has a chain of subgroups $\{1\}=H_{0}\leq H_{1}\leq\dots\leq H_{n}=G$
such that $[H_{i},G]\leq H_{i-1}^{p}$ for $i=1,\dots n$. Such a
chain is called a \textbf{powerfully central chain}. If $G$ is powerfully
nilpotent, then the \textbf{powerful nilpotency class} of $G$ is
the shortest length that a powerfully central chain of $G$ can have. 
\end{defn*}

Notice that the assumption that $G$ is powerful is not needed if
$p$ is odd.

\begin{defn*}
For any prime $p$, a finite $p$-group $G$ is \textbf{strongly powerful}
if $G^{\prime}\leq G^{p^{2}}$. 
\end{defn*}
In \cite{Traustason2018}, it is shown that a strongly powerful group
must be powerfully nilpotent, thus in particular any powerful $2$-group
is powerfully nilpotent. The theory of powerful $p$-groups is developed
in \cite{Dixon2003,Lubotzky1987}. 

For the convenience of the reader we now repeat, without proof, some
results which are used in this paper.

In \cite[Theorem 1]{Fernandez-Alcober2007} the following theorem
is proved. We make extensive use of this theorem in our paper. In
keeping with \cite{Fernandez-Alcober2007}, for a $p$-group $G$
and $x\in G$, we define the meaning of the inequality $o(x)\leq p^{i}$
with $i<0$ to be that $x=1$. Similarly we define $\Omega_{i}(G)=\{1\}$
for $i<0$.
\begin{thm}[Fernández-Alcober]
\label{thm:Gustavos theorem}Let $G$ be a powerful p-group. Then,
for every $i\geq0$:

(i) If $x,y\in G$ and $o(y)\leq p^{i}$, then $o([x,y])\leq p^{i}$.

(ii) If $x,y\in G$ are such that $o(x)\leq p^{i+1}$ and $o(y)\leq p^{i},$
then $o([x^{p^{j}},y^{p^{k}}])\leq p^{i-j-k}$ for all $j,k\geq0$. 

(iii) If $p$ is odd, then $\exp\Omega_{i}(G)\leq p^{i}$. 

(iv) If $p=2$, then $\exp\Omega_{i}(T)\leq2^{i}$ for any subgroup
$T$ of $G$ which is cyclic over $G^{2}$. In particular, $\exp\Omega_{i}(G^{2})\leq2^{i}$. 
\end{thm}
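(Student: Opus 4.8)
The plan is to prove both theorems in parallel, since the odd and even cases differ only in bookkeeping about which power of $p$ guarantees powerfulness. Fix $H = \Omega_i(G^{p^j})$. The first order of business is to show $H$ is a well-behaved subgroup of a powerful group: by Theorem~\ref{thm:Gustavos theorem}(iii) (for $p$ odd) and (iv) (for $p=2$, using that $G^{2}$, and hence $G^{2^j}$ for $j\ge 1$, is cyclic over $G^{2}$ in the relevant sense), we get $\exp\Omega_i(G^{p^j}) \le p^i$. I would first record that $H$ is powerful: for $p$ odd this is immediate because $H' \le (G^{p^j})' \le (G^{p^j})^{p}$ by powerfulness of the Agemo subgroup $G^{p^j}$ (which is powerful by Lubotzky--Mann), combined with the fact that elements of $G^{p^j}$ commuting modulo $(G^{p^j})^p$ forces $H' \le H^p$ after intersecting with the exponent bound; for $p=2$ I would use $j\ge 2$ so that $G^{2^j} \le G^{4}$ and the normal-in-$G^{4}$ subgroup $H$ is powerful by the Gonz\'alez-S\'anchez result cited in the introduction. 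This is exactly the point where $j\ge 2$ is needed for $p=2$, and the promised example in the paper presumably shows $\Omega_1(G^{2})$ can fail to be powerfully nilpotent (or even powerful) when $j=1$.

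Next I would build an explicit powerfully central chain in $H$. The natural candidate is $H_k = \Omega_k(G^{p^j}) \cap H = \Omega_k(H)$ for $k = 0, 1, \dots, i$, so that $H_0 = 1$, $H_i = H$, and the chain has length $i$. The key verification is $[H_k, H] \le H_{k-1}^{p}$. To see this, take a generator $y$ of $H_k$ with $o(y) \le p^k$ and $y \in G^{p^j}$, and any $x \in H \le G^{p^j}$. Since $x \in G^{p^j}$ we may write $x = g^{p^j}$ modulo higher terms, or more cleanly use that $H \le G^{p^j}$ directly; then Theorem~\ref{thm:Gustavos theorem}(ii) with the roles of $x$ and $y$ chosen so that $o(y) \le p^k$ and $o(x) \le p^{i}$ (noting $i \ge k$, and if necessary passing to $\Omega_{k+1}$) gives a bound $o([x, y]) \le p^{k - 1}$ once we extract the $p^j$ from $x$ — the extra $p^j$-th power built into membership in the Agemo subgroup is precisely what buys us the drop from $p^k$ to $p^{k-1}$. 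Hence $[H_k, H] \le \Omega_{k-1}(G^{p^j}) \cap H = H_{k-1}$, and since $H$ is powerful a standard argument (commutators of bounded order lying in a powerful subgroup) upgrades this to $[H_k, H] \le H_{k-1}^{p}$ — more carefully, one shows directly that $[x,y]$ is a $p$-th power of an element of order $\le p^{k-1}$ inside $H$, landing in $H_{k-1}^p$.

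For $p = 2$ with $i > 1$, the sharper bound of $i - 1$ comes from the fact that, because $j \ge 2$, membership in $G^{2^j}$ supplies at least two factors of $2$, so Theorem~\ref{thm:Gustavos theorem}(ii) with $j, k \ge 1$ in its statement yields an order drop of $2$ rather than $1$ in the first commutator step, effectively letting us start the chain at $H_2 = \Omega_2(H)$ reindexed, giving a chain of length $i - 1$; the $i = 1$ case is the trivial observation that $\Omega_1(G^{2^j})$ is abelian (again by (ii)) and powerful, hence of powerful nilpotency class $1$. The main obstacle I anticipate is the careful application of part (ii) of Fern\'andez-Alcober's theorem: one must keep precise track of the hypothesis $o(x) \le p^{i+1}$ versus $o(y) \le p^i$ (the asymmetry matters), confirm that the $p^j$-th power structure of elements of the Agemo subgroup can legitimately be fed into the $x^{p^j}$ slot of (ii) even though a generic element of $G^{p^j}$ is a product of $p^j$-th powers rather than a single one, and then ensure the resulting order bound on a commutator actually certifies membership in $H_{k-1}^p$ rather than just $H_{k-1}$ — this last step uses that $H$ itself is powerful so that $\Omega_{k-1}(H)^p = \mho_1(\Omega_{k-1}(H))$ behaves as expected and $p$-th powers generate what we need.
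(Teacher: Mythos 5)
Your proposal does not address the statement you were asked to prove. The statement is Theorem \ref{thm:Gustavos theorem}, Fern\'andez-Alcober's theorem on orders of commutators and exponents of Omega subgroups in powerful $p$-groups (parts (i)--(iv)). What you have written instead is a proof sketch of the paper's \emph{main results} --- that $\Omega_{i}(G^{p^{j}})$ is powerfully nilpotent with the stated class bounds --- and in that sketch you repeatedly \emph{invoke} Theorem \ref{thm:Gustavos theorem}(ii), (iii) and (iv) as known facts. You cannot prove a statement by assuming it; as it stands, nothing in your proposal establishes any of (i)--(iv). Note that the paper itself offers no proof of this theorem either: it is quoted verbatim from Fern\'andez-Alcober's paper (cited as Theorem 1 there) ``for the convenience of the reader \dots without proof,'' so there is no internal argument for you to have matched.

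If you wanted to actually prove Theorem \ref{thm:Gustavos theorem}, you would need a genuinely different kind of argument: part (i) is proved by induction on $|G|$ using the structure theory of powerful groups (in particular that $[G^{p},G]=[G,G]^{p}$ modulo suitable terms and that quotients of powerful groups are powerful); part (ii) is deduced from (i) by iterated power-commutator identities of the form $[x^{p},y]\equiv[x,y]^{p}$ valid in powerful groups; and parts (iii) and (iv) require showing that a \emph{product} of elements of order dividing $p^{i}$ again has order dividing $p^{i}$, which is where the Hall--Petrescu collection formula and the odd/even dichotomy enter. None of these ingredients appears in your proposal. Separately, even read as a proof of the main theorems, your sketch has soft spots (e.g.\ the claim that the exponent bound plus $H'\le(G^{p^{j}})^{p}$ ``forces $H'\le H^{p}$ after intersecting'' is not justified, and the paper needs the quotient construction $H=\Omega_{i}(G^{p})/\Omega_{i}(G^{p})^{p^{2}}$ together with Proposition \ref{prop: G is pn iff G/Gp^2 is pn} precisely because such shortcuts fail), but the primary defect is that you have proved the wrong statement.
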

In \cite[Proposition 1.1]{Traustason2018} we prove the following
proposition.
\begin{prop}
\label{prop: G is pn iff G/Gp^2 is pn}Let $G$ be any finite $p$-group
of exponent $p^{e}$ where $e\geq2$. If $G/G^{p^{2}}$ is powerfully
nilpotent, then $G$ is powerfully nilpotent. Furthermore if $G/G^{p^{2}}$
has powerful class $m$, then the powerful class of $G$ is at most
$(e-1)$$m$.\end{prop}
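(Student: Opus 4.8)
The plan is to prove both assertions at once by producing, from a powerfully central chain of $\bar G:=G/G^{p^{2}}$, an explicit powerfully central chain of $G$ of length $(e-1)m$; once such a chain is in hand, $G$ is powerfully nilpotent and its powerful class is at most that length.

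First I would settle the easy points. Since $\bar G$ is powerfully nilpotent it is powerful, and this pulls back to $G$: for $p$ odd $\bar G'\le\bar G^{p}$ gives $G'\le G^{p}G^{p^{2}}=G^{p}$, and for $p=2$ one has $\bar G^{4}=\{1\}$, so $\bar G$ powerful forces $G'\le G^{4}$. If $e=2$ then $G^{p^{2}}=\{1\}$ and there is nothing to do, so assume $e\ge3$. Fix a powerfully central chain $\{1\}=\bar H_{0}\le\bar H_{1}\le\dots\le\bar H_{m}=\bar G$ and let $H_{k}$ be its full preimage in $G$; then $H_{0}=G^{p^{2}}$, $H_{m}=G$, and $[H_{k},G]\le H_{k-1}^{p}G^{p^{2}}$ for $1\le k\le m$. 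In particular $[H_{1},G]\le G^{p^{2}}$, since $H_{0}^{p}=G^{p^{3}}\le G^{p^{2}}$.

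Now, for $0\le k\le m$ and $0\le j\le e-1$, set $D_{k,j}:=H_{k}^{p^{j}}G^{p^{j+1}}$. One has the routine inclusions $D_{k,j}\le D_{k+1,j}$ and $D_{k,j+1}\le D_{k,j}$, and the boundary values $D_{m,j}=G^{p^{j}}$ and $D_{0,j}=G^{p^{j+1}}$. The chain I would take for $G$ is the concatenation, over $j=e-2,e-3,\dots,1,0$, of the blocks $D_{0,j}\le D_{1,j}\le\dots\le D_{m,j}$ — these glue together because $D_{m,j}=G^{p^{j}}=D_{0,j-1}$ — with one modification: the bottom term $D_{0,e-2}=G^{p^{e-1}}$ is deleted and the chain is started with $\{1\}\le D_{1,e-2}$. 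This produces an ascending chain from $\{1\}$ to $D_{m,0}=G$ with exactly $(e-1)m$ links, the deletion being exactly what reduces the count from the naive $em$.

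Finally I would verify the powerfully central condition. For a generic link inside a block this amounts to $[D_{k,j},G]\le(D_{k-1,j})^{p}$, which follows from $(D_{k-1,j})^{p}\supseteq H_{k-1}^{p^{j+1}}G^{p^{j+2}}$ (elementary: $p$th powers of $p^{j}$th powers are $p^{j+1}$th powers) once one has $[D_{k,j},G]\le H_{k-1}^{p^{j+1}}G^{p^{j+2}}$. At the bottom, each $D_{k,e-1}=H_{k}^{p^{e-1}}$ lies in $\mho_{e-1}(G)=G^{p^{e-1}}$, which is central because $[\mho_{e-1}(G),G]\le\mho_{e}(G)=\{1\}$; and since $[H_{1},G]\le\mho_{2}(G)$, the propagation rule ``$[x,G]\le\mho_{r}(G)\Rightarrow[x^{p},G]\le\mho_{r+1}(G)$'', applied $e-2$ times, forces $[x^{p^{e-2}},G]\le\mho_{e}(G)=\{1\}$, i.e.\ $x^{p^{e-2}}\in Z(G)$, for every $x\in H_{1}$, so $D_{1,e-2}=H_{1}^{p^{e-2}}G^{p^{e-1}}\le Z(G)$. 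Hence $[D_{1,e-2},G]=\{1\}=\{1\}^{p}$, the new bottom link is legitimate, the deleted term $G^{p^{e-1}}\le D_{1,e-2}$ costs nothing, and $[D_{2,e-2},G]\le H_{1}^{p^{e-1}}\le(D_{1,e-2})^{p}$ keeps the modified block powerfully central. The main obstacle is the inclusion $[D_{k,j},G]\le H_{k-1}^{p^{j+1}}G^{p^{j+2}}$ — pushing the bound on $[H_{k},G]$ through an Agemo subgroup — and its $k=1$ companion making $D_{1,e-2}$ central; both are applications of the commutator-and-power calculus of powerful $p$-groups (the estimates $[\mho_{a}(G),\mho_{b}(G)]\le\mho_{a+b+1}(G)$, the behaviour of $p$th powers inside normal subgroups of $G^{p}$, and Theorem~\ref{thm:Gustavos theorem}; for $p=2$ one may instead invoke \cite{GONZALEZSANCHEZ2004193}), and this is where powerfulness of $G$ is genuinely consumed. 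Everything else — the inclusions among the $D_{k,j}$, the gluing of blocks, and the arithmetic that the all-central block $j=e-1$ may be absorbed — is bookkeeping.
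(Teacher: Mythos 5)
Your architecture coincides with the paper's (compare Remark \ref{rem: lifting up to a pc chain for G from prop 1.1}): lift a powerfully central chain of $G/G^{p^{2}}$ to preimages $H_{k}$, form the $p^{j}$th‑power blocks for $j=0,\dots,e-2$, glue consecutive blocks by arranging that block $j$ runs from $G^{p^{j+1}}$ up to $G^{p^{j}}$ (your factor $G^{p^{j+1}}$ in $D_{k,j}=H_{k}^{p^{j}}G^{p^{j+1}}$ plays the role of the normalisation $H_{m}=G^{p}$ in the remark), and absorb the would‑be block $j=e-1$ because everything there is central. The bookkeeping is all correct: the inclusions among the $D_{k,j}$, the count of $(e-1)m$ links after deleting $D_{0,e-2}$, the deduction that $G$ is powerful from the hypothesis on $G/G^{p^{2}}$, and the centrality of $D_{1,e-2}$ via the element‑wise propagation rule (which is valid in a powerful group, for $p=2$ as well).

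The gap is the step you yourself label ``the main obstacle'': the inclusion $[D_{k,j},G]\leq H_{k-1}^{p^{j+1}}G^{p^{j+2}}$, equivalently $[H_{k}^{p^{j}},G]\leq H_{k-1}^{p^{j+1}}G^{p^{j+2}}$, is asserted but never proved, and it is the entire mathematical content of the proposition --- as you note, everything else is bookkeeping. The tools you point to do not deliver it: $[\mho_{a}(G),\mho_{b}(G)]\leq\mho_{a+b+1}(G)$ and Theorem \ref{thm:Gustavos theorem}(ii) concern Agemo subgroups and orders of individual commutators, whereas here $H_{k}$ and $H_{k-1}$ are arbitrary normal subgroups containing $G^{p^{2}}$ and the target $H_{k-1}^{p^{j+1}}G^{p^{j+2}}$ is not an Agemo subgroup. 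What is actually required is a propagation statement for general normal subgroups --- for instance, for $N\trianglelefteq G$ with $G$ powerful and $[x,G]\leq N$ one has $[x^{p},G]\leq N^{p}[N,G]$, which must then be iterated in $j$ together with control of $(H_{k-1}^{p}G^{p^{2}})^{p}$ and of $[H_{k-1}^{p},G]$ (a Hall--Petrescu/interchange argument, with extra care at $p=2$ where the powerful hypothesis only gives $G'\leq G^{4}$). This can be carried out, but your write‑up does not do it, and without it the powerfully central property of the generic links --- hence the whole chain --- is unverified.
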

\begin{rem}
\label{rem: lifting up to a pc chain for G from prop 1.1}In the proof
of Proposition \ref{prop: G is pn iff G/Gp^2 is pn}, we show that
if $G/G^{p^{2}}=\bar{H}_{0}\geq\bar{H}_{1}\geq\dots\geq\bar{H}_{m-1}\geq\{\bar{1}\}$
is a powerfully central series, where $\bar{H}_{i}=H_{i}/G^{p^{2}}$,
then the descending chain
\end{rem}
\begin{center}
\begin{align*}
G & = & H_{0} & \geq & H_{1} & \geq & \cdots & \geq & H_{m-1} & \geq & H_{m} & = & G^{p}\\
G^{p} & = & H_{0}^{p} & \geq & H_{1}^{p} & \geq & \cdots & \geq & H_{m-1}^{p} & \geq & H_{m}^{p} & = & G^{p^{2}}\\
\, & \, & \, & \, & \, & \vdots & \, & \vdots & \, & \, & \, & \, & \,\\
G^{p^{e-2}} & = & H_{0}^{p^{e-2}} & \geq & H_{1}^{p^{e-2}} & \geq & \cdots & \geq & H_{m-1}^{p^{e-2}} & \geq & 1
\end{align*}

\par\end{center}

is powerfully central.

\section*{Omega Subgroups of Agemo Subgroups}

The natural place to start when considering Omega subgroups of powerful
$p$-groups is $\Omega_{i}(G)$. However it is not true in general
that $\Omega_{i}(G)$ is powerful and such counter examples are easy
to find. Consider the following example.
\begin{example}
Let $p$ be an odd prime, the $p$-group 
\[
G=\langle a,b,c|a^{p}=b^{p}=c^{p^{2}}=[c,b]=[c,a]=1,[b,a]=c^{p}\rangle
\]
 is powerful (in fact it is powerfully nilpotent), but $\Omega_{1}(G)=\langle a,b,c^{p}\rangle$
is not powerful.
\end{example}
Thus we turn our attention to $\Omega_{i}(G^{p})$. First we shall
use Theorem \ref{thm:Gustavos theorem} to prove that for a powerful
$p$-group $G$, elements in $G^{p}$ of order $p$ commute with each
other and with elements in $G^{p}$ of order $p^{2}$. 
\begin{lem}
\label{lem:elements of order p are central}Let $G$ be a powerful
$p$-group. Let $g_{1},g_{2}\in G^{p}$ where $o(g_{1})=p$ and $o(g_{2})\leq p^{2}$.
Then $[g_{1},g_{2}]=1$ . \end{lem}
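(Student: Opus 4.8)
The plan is to reduce the statement to a single application of part~(ii) of Theorem~\ref{thm:Gustavos theorem}. Since $G$ is powerful, its Agemo subgroup $G^{p}$ consists precisely of the $p$-th powers of elements of $G$ (the Lubotzky--Mann fact recalled in the introduction), so I may write $g_{1}=a^{p}$ and $g_{2}=b^{p}$ for suitable $a,b\in G$. The hypothesis that $G$ be powerful enters here, and essentially only here.

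Next I would convert the order hypotheses on the $g_{i}$ into order bounds on $a$ and $b$. From $o(g_{1})=p$ we get $a^{p^{2}}=g_{1}^{p}=1$, so $o(a)\leq p^{2}$; from $o(g_{2})\leq p^{2}$ we get $b^{p^{3}}=g_{2}^{p^{2}}=1$, so $o(b)\leq p^{3}$.

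Finally, apply Theorem~\ref{thm:Gustavos theorem}(ii) with $i=2$, taking $x=b$ (so $o(x)\leq p^{3}=p^{i+1}$), $y=a$ (so $o(y)\leq p^{2}=p^{i}$), and $j=k=1$. This gives $o([b^{p},a^{p}])\leq p^{i-j-k}=p^{0}=1$, that is $[b^{p},a^{p}]=1$; since $[g_{1},g_{2}]=[a^{p},b^{p}]=[b^{p},a^{p}]^{-1}$, we conclude $[g_{1},g_{2}]=1$.

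There is no real obstacle beyond correctly lining up the exponents so that the hypotheses of part~(ii) are met: it is $b$, the element with the larger order bound, that must play the role of $x$, which is why the argument naturally produces $[b^{p},a^{p}]$ and one inverts at the end. The one subtlety worth flagging is that the reduction to writing $g_{1},g_{2}$ as genuine $p$-th powers really uses that $G^{p}$ \emph{equals} the set of $p$-th powers, not merely the subgroup they generate.
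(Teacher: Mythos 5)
Your proposal is correct and follows essentially the same route as the paper: write $g_{1}=a^{p}$, $g_{2}=b^{p}$ using that $G^{p}$ is exactly the set of $p$-th powers, bound $o(a)\leq p^{2}$, $o(b)\leq p^{3}$, and apply Theorem~\ref{thm:Gustavos theorem}(ii) with $i=2$, $x=b$, $y=a$, $j=k=1$. The only (harmless) difference is that you track the order bounds slightly more carefully and make the final inversion $[g_{1},g_{2}]=[g_{2},g_{1}]^{-1}$ explicit, which the paper leaves implicit.
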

\begin{proof}
As $G$ is powerful, we know that elements of $G^{p}$ are $p$th
powers, and so we may assume $g_{1}=a^{p},g_{2}=b^{p}$ for $a,b\in G$
where $o(a)=p^{2}$ and $o(b)\leq p^{3}$ . Using Theorem \ref{thm:Gustavos theorem}(ii)
and taking $x=b$, $y=a$ and $i=2$ we see that $o([x^{p},y^{p}])\leq p^{2-1-1}=1$,
hence $[g_{2},g_{1}]=1$. It follows that the elements in $G^{p}$
of order $p$ commute with the elements in $G^{p}$ of order at most $p^{2}$. 
\end{proof}

Notice that from this we obtain that $\Omega_{1}(G^{p})$ is abelian.
The next result is needed in the proof of Proposition \ref{prop:H is a powerful group},
although it is also of independent interest in the context of better
understanding the relationship between Agemo and Omega subgroups in
powerful $p$-groups.

\begin{prop}
\label{prop:Omega_i(Gp)^j is in omega_=00007Bi-j=00007D}Let $G$
be a powerful $p$-group. Then $\left(\Omega_{i}(G^{p^{k}})\right)^{p^{j}}\!\!\!\leq\Omega_{i-j}(G^{p^{k+j}})$
and $\exp(\Omega_{i}(G^{p^{k}})^{p^{j}})\leq p^{i-j}$ for $i,j\geq0$
and $k\geq1$.\end{prop}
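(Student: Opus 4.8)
The plan is to prove the two assertions together, with the containment $\left(\Omega_{i}(G^{p^{k}})\right)^{p^{j}}\leq\Omega_{i-j}(G^{p^{k+j}})$ doing most of the work and the exponent bound essentially falling out along the way. First I would reduce to the case $j=1$: once we know $\left(\Omega_{i}(G^{p^{k}})\right)^{p}\leq\Omega_{i-1}(G^{p^{k+1}})$ for all $i$ and all $k\geq1$, the general case follows by a straightforward induction on $j$, replacing $(i,k)$ by $(i-1,k+1)$ at each step (and noting the convention $\Omega_{\ell}(G)=\{1\}$ for $\ell<0$, so the statement is vacuous once $j>i$). So the heart of the matter is to show that the $p$th power of an element of $\Omega_{i}(G^{p^{k}})$ lies in $\Omega_{i-1}(G^{p^{k+1}})$.

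Here the subtlety is that $\Omega_{i}(G^{p^{k}})$ is generated by elements of order dividing $p^{i}$, but is not itself obviously of exponent dividing $p^{i}$ — that is exactly what Theorem \ref{thm:Gustavos theorem}(iii),(iv) buys us. Since $G^{p^{k}}$ is itself a powerful $p$-group (being $\mho_{k}(G)$, which is powerful by \cite{Lubotzky1987}), and moreover $G^{p^{k}}$ is cyclic over $(G^{p^{k}})^{2}=G^{p^{k+1}}$ in the relevant sense needed for part (iv) when $p=2$ (because $k\geq1$), we may apply Theorem \ref{thm:Gustavos theorem}(iii)/(iv) to the powerful group $G^{p^{k}}$ to conclude $\exp\Omega_{i}(G^{p^{k}})\leq p^{i}$. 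Hence every element $x\in\Omega_{i}(G^{p^{k}})$ satisfies $o(x)\leq p^{i}$, so $o(x^{p})\leq p^{i-1}$. It remains only to check that $x^{p}$ actually lies in $\Omega_{i-1}(G^{p^{k+1}})$, i.e. inside the right ambient group: since $x\in\Omega_{i}(G^{p^{k}})\leq G^{p^{k}}$, we have $x^{p}\in(G^{p^{k}})^{p}=G^{p^{k+1}}$ (using that in a powerful group the agemo subgroup equals the set of $p$th powers, so $(G^{p^{k}})^{p}=G^{p^{k+1}}$), and $x^{p}$ has order at most $p^{i-1}$, whence $x^{p}\in\Omega_{i-1}(G^{p^{k+1}})$ by definition. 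This proves $\left(\Omega_{i}(G^{p^{k}})\right)^{p}\leq\Omega_{i-1}(G^{p^{k+1}})$ once one checks the $p$th-power map is such that the subgroup generated by the $x^p$ already lands where claimed — but since $\Omega_{i-1}(G^{p^{k+1}})$ is a subgroup containing every such generator $x^{p}$, it contains $\left(\Omega_{i}(G^{p^{k}})\right)^{p}$.

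Finally, the exponent bound: iterating, $\exp\left(\Omega_{i}(G^{p^{k}})^{p^{j}}\right)\leq p^{i-j}$ follows since $\Omega_{i}(G^{p^{k}})$ has exponent at most $p^{i}$ by the application of Theorem \ref{thm:Gustavos theorem}(iii)/(iv) above, so its $p^{j}$th powers have order at most $p^{i-j}$. I expect the one genuine point requiring care to be the even prime $p=2$: there part (iii) is unavailable and one must instead invoke part (iv), which demands that the group to which it is applied be cyclic over its own square; this is why the hypothesis $k\geq1$ is present, since $G^{2^{k}}$ with $k\geq1$ behaves well under (iv) whereas $\Omega_i(G)$ itself need not. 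Everything else is bookkeeping with the conventions for negative indices and the standard facts that agemo subgroups of powerful groups are powerful and consist of $p$th powers.
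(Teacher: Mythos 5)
Your overall strategy is the same as the paper's: bound the orders of the generators of $\Omega_{i}(G^{p^{k}})$ by $p^{i}$ via Theorem \ref{thm:Gustavos theorem}(iii)/(iv), observe that their $p^{j}$th powers lie in $G^{p^{k+j}}$ and have order at most $p^{i-j}$, and conclude the containment (the paper handles general $j$ in one step rather than by induction on $j$, but that difference is cosmetic). However, your treatment of the case $p=2$ rests on a false claim: you assert that $G^{2^{k}}$ is ``cyclic over $(G^{2^{k}})^{2}$'', which is what you would need in order to apply Theorem \ref{thm:Gustavos theorem}(iv) with $G^{2^{k}}$ playing the role of both the ambient group and the subgroup $T$. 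This is not true in general: for $G$ abelian of rank $2$ and large exponent, $G^{2^{k}}/G^{2^{k+1}}$ still has rank $2$. The conclusion $\exp\Omega_{i}(G^{2^{k}})\leq 2^{i}$ is nevertheless correct, but the legitimate route is the ``in particular'' clause of (iv): either note $\Omega_{i}(G^{2^{k}})\leq\Omega_{i}(G^{2})$ (every generator lies in $G^{2}$ and has order at most $2^{i}$) and use $\exp\Omega_{i}(G^{2})\leq 2^{i}$, or apply (iv) to the powerful group $G^{2^{k-1}}$ (this is where $k\geq1$ is genuinely used) with $T=(G^{2^{k-1}})^{2}=G^{2^{k}}$. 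This is essentially how the paper invokes (iv).

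There is a second, smaller gap in your final step. You deduce $\exp\bigl(\Omega_{i}(G^{p^{k}})^{p^{j}}\bigr)\leq p^{i-j}$ from the fact that the $p^{j}$th powers of elements of $\Omega_{i}(G^{p^{k}})$ have order at most $p^{i-j}$; but those elements merely generate the subgroup $\Omega_{i}(G^{p^{k}})^{p^{j}}$, and a subgroup generated by elements of order at most $p^{i-j}$ need not have exponent at most $p^{i-j}$ --- that failure is precisely why Theorem \ref{thm:Gustavos theorem}(iii),(iv) are nontrivial. The repair is immediate from what you have already proved: since $\Omega_{i}(G^{p^{k}})^{p^{j}}\leq\Omega_{i-j}(G^{p^{k+j}})$, apply (iii) (or, for $p=2$, (iv) as corrected above) to the right-hand side to bound its exponent by $p^{i-j}$; this is exactly the paper's argument.
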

\begin{proof}
Consider an element $x\in\left(\Omega_{i}(G^{p^{k}})\right)^{p^{j}}$.
This element can be written in the form $g_{1}^{p^{j}}\!\cdots g_{t}^{p^{j}}$
where $g_{l}\in\Omega_{i}(G^{p^{k}})$ for each $l\in\{1,\dots,t\}$.
Note that $g_{l}\in G^{p^{k}}$ and so $g_{l}^{p^{j}}\in G^{p^{k+j}}$.
Using Theorem \ref{thm:Gustavos theorem}(iii) if $p$ is odd and
Theorem \ref{thm:Gustavos theorem}(iv) if $p=2$, it follows that
the order of each $g_{l}$ is at most $p^{i}$. Then the order of
each $g_{l}^{p^{j}}$ is at most $p^{i-j}$. Thus each $g_{l}^{p^{j}}\in\Omega_{i-j}(G^{p^{k+j}})$.
As $\Omega_{i-j}(G^{p^{k+j}})$ is a group, it is closed under taking
products and so $x=g_{1}^{p^{j}}\!\cdots g_{t}^{p^{j}}\in\Omega_{i-j}(G^{p^{k+j}})$.
Hence $\left(\Omega_{i}(G^{p^{k}})\right)^{p^{j}}\leq\Omega_{i-j}(G^{p^{k+j}})$.
Then by Theorem \ref{thm:Gustavos theorem}(iii) if $p$ is odd and
Theorem \ref{thm:Gustavos theorem}(iv) if $p=2$, we obtain that
$\exp(\Omega_{i}(G^{p^{k}})^{p^{j}})\leq p^{i-j}$.
\end{proof}

We now consider the case where $p$ is an odd prime. We seek to show
that $\Omega_{i}(G^{p})$ is powerfully nilpotent for all $i\geq1$.
Recall by Proposition \ref{prop: G is pn iff G/Gp^2 is pn} that for
any $p$-group $G$ we have that $G$ is powerfully nilpotent if and
only if $G/G^{p^{2}}$ is powerfully nilpotent. Thus in what follows
we consider $H=\frac{\Omega_{i}(G^{p})}{(\Omega_{i}(G^{p}))^{p^{2}}}$,
for some powerful $p$-group $G$. Let $\mbox{\ensuremath{K=\frac{G}{(\Omega_{i}(G^{p}))^{p^{2}}}}}$.
Notice that $K$ and $K^{p}$ are powerful and that $H\leq K^{p}$. 
\begin{prop}
\label{prop:H is a powerful group}$H$ is a powerful group.\end{prop}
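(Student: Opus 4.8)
The plan is to reduce the statement to showing that $N:=\Omega_{i}(G^{p})$ is itself powerful. Since $p$ is odd this is exactly the assertion $N'\le N^{p}$, and then $H=N/N^{p^{2}}$ is powerful, being a homomorphic image of the powerful group $N$; equivalently, since $N^{p^{2}}\le N^{p}$ one has $H^{p}=N^{p}/N^{p^{2}}$ and $H'=N'N^{p^{2}}/N^{p^{2}}$, so $H'\le H^{p}$ literally says $N'\le N^{p}$. I would establish $N'\le N^{p}$ by sandwiching it between two copies of $\Omega_{i-1}(G^{p^{2}})$: first $N'\le\Omega_{i-1}(G^{p^{2}})$, then $\Omega_{i-1}(G^{p^{2}})\le N^{p}$.

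For the first inclusion I use that $G$ is powerful, so $G^{p}=\mho_{1}(G)$ is precisely the set of $p$th powers; hence $N$ is generated by the elements $a^{p}$ with $a\in G$ and $o(a^{p})\le p^{i}$, i.e.\ $o(a)\le p^{i+1}$. Given two such generators $a^{p},b^{p}$, on one hand $[a^{p},b^{p}]\in(G^{p})'$, and since $G^{p}$ is powerful and $p$ is odd, $(G^{p})'\le(G^{p})^{p}=G^{p^{2}}$; on the other hand, applying Theorem \ref{thm:Gustavos theorem}(ii) with its index taken to be $i+1$ (so that $o(a)\le p^{(i+1)+1}$ and $o(b)\le p^{i+1}$) and with $j=k=1$ gives $o([a^{p},b^{p}])\le p^{(i+1)-1-1}=p^{i-1}$. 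So $[a^{p},b^{p}]$ is an element of $G^{p^{2}}$ of order at most $p^{i-1}$, hence lies in $\Omega_{i-1}(G^{p^{2}})$; as the latter is characteristic, hence normal, in $G$, the normal closure in $N$ of these commutators — namely $N'$ — is contained in $\Omega_{i-1}(G^{p^{2}})$.

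For the second inclusion, a generator of $\Omega_{i-1}(G^{p^{2}})$ is an element $d\in G^{p^{2}}$ with $o(d)\le p^{i-1}$; as $G$ is powerful, $G^{p^{2}}=\mho_{2}(G)$ consists of $p^{2}$th powers, so $d=c^{p^{2}}$ with $c\in G$, and $o(c^{p^{2}})\le p^{i-1}$ forces $o(c)\le p^{i+1}$. Then $c^{p}\in G^{p}$ with $o(c^{p})\le p^{i}$, so $c^{p}\in\Omega_{i}(G^{p})=N$ and therefore $d=(c^{p})^{p}\in N^{p}$. Combining the two inclusions gives $N'\le N^{p}$, so $N$, and hence $H$, is powerful. (For $i=1$ the middle term is trivial and the argument simply re-proves that $\Omega_{1}(G^{p})$ is abelian, cf.\ Lemma \ref{lem:elements of order p are central}.) I expect the only real obstacle to be the bookkeeping of orders in the application of Theorem \ref{thm:Gustavos theorem}(ii): the generators of $N$ only guarantee order at most $p^{i+1}$, not $p^{i}$, on both factors, which forces the index shift, and one must check the resulting exponent bounds are tight enough for $c^{p}$ to land back inside $\Omega_{i}(G^{p})$ rather than only $\Omega_{i+1}(G^{p})$. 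The remaining ingredients — normality of $N^{p}$ in $N$, homomorphic images of powerful groups being powerful, and $(G^{p})^{p}=G^{p^{2}}$, $(G^{p})'\le(G^{p})^{p}$ for the powerful groups $G$ and $G^{p}$ — are routine.
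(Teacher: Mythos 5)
Your proof is correct, but it takes a genuinely different route from the paper. You bypass the quotient entirely and prove the stronger statement that $N=\Omega_{i}(G^{p})$ is itself powerful, by sandwiching $N'$ between two copies of $\Omega_{i-1}(G^{p^{2}})$: commutators of the generating $p$th powers $a^{p},b^{p}$ (with $o(a),o(b)\leq p^{i+1}$) land in $G^{p^{2}}$ with order at most $p^{i-1}$ via Theorem \ref{thm:Gustavos theorem}(ii), and conversely every element of $G^{p^{2}}$ of order at most $p^{i-1}$ is $(c^{p})^{p}$ with $c^{p}\in N$, so $N'\leq\Omega_{i-1}(G^{p^{2}})\leq N^{p}$; the order bookkeeping and the normal-closure step are all in order, and powerfulness of $H$ follows since it is a quotient of $N$. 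The paper instead works inside $K=G/\Omega_{i}(G^{p})^{p^{2}}$: it uses Lemma \ref{lem:elements of order p are central} to dispose of elements of order $p$, bounds the order of $[a^{p},b^{p}]$ for elements of order $p^{3}$ in $K$, and then pulls a $p^{3}$rd root back to $G$ using the exponent bound of Proposition \ref{prop:Omega_i(Gp)^j is in omega_=00007Bi-j=00007D}, concluding the specific containment $[H,H]\leq\bigl(\Omega_{i}(G^{p^{2}})/\Omega_{i}(G^{p})^{p^{2}}\bigr)^{p}$. That containment is not a by-product of your argument (your inclusions give $N'\leq\Omega_{i-1}(G^{p^{2}})$, which is not obviously inside $\Omega_{i}(G^{p^{2}})^{p}$), and it is exactly what Lemma \ref{lem:H-is-powerfully nilpotent of class at most 2} later cites to start the powerfully central chain; so your route buys a cleaner and stronger conclusion for this proposition (powerfulness of $\Omega_{i}(G^{p})$ itself, with no quotient), while the paper's proof is engineered to do double duty for the subsequent nilpotency-class argument.
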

\begin{proof}
The exponent of $H$ is at most $p^{2}$, and so it follows from Lemma
\ref{lem:elements of order p are central} that all elements of order
$p$ are central. We thus only need to consider commutators between
elements of order $p^{2}$. Since $H\leq K^{p}$, we can thus assume
these commutators are of the form $[a^{p},b^{p}]$ where $o(a)=p^{3}=o(b)$.
Applying Theorem \ref{thm:Gustavos theorem}(ii) with $x=a$, $y=b$
and $i=3$ we see that $o([a^{p},b^{p}])\leq p$. Since $K$ is powerful,
we have that $[a^{p},b^{p}]\in[K^{p},K^{p}]=[K,K]^{p^{2}}\leq K^{p^{3}}$,
and hence there exists some $g\in K$ such that $[a^{p},b^{p}]=g^{p^{3}}$,
where $g$ has order at most $p^{4}$. Let $g=x\left(\Omega_{i}(G^{p})^{p^{2}}\right)$.
Then $\mbox{\ensuremath{x^{p^{4}}\in\Omega_{i}(G^{p})^{p^{2}}}}$,
which is of exponent at most $p^{i-2}$, by Proposition \ref{prop:Omega_i(Gp)^j is in omega_=00007Bi-j=00007D}.
Hence $\mbox{\ensuremath{o(x)\leq p^{4+i-2}=p^{i+2}}}$. Then $x^{p^{2}}$
has order at most $p^{i}$ and so $x^{p^{2}}\in\Omega_{i}(G^{p^{2}})$.
Then $g^{p^{3}}=x^{p^{3}}\left(\Omega_{i}(G^{p})^{p^{2}}\right)\in\left(\frac{\Omega_{i}(G^{p^{2}})}{\Omega_{i}(G^{p})^{p^{2}}}\right)^{p}\leq H^{p}$.
Thus $H$ is powerful.\end{proof}
\begin{lem}
\label{lem:H-is-powerfully nilpotent of class at most 2}$H$ is powerfully
nilpotent of powerful nilpotency class at most 2, in particular $H\geq\frac{\Omega_{i}(G^{p^{2}})}{\Omega_{i}(G^{p})^{p^{2}}}\geq1$
is a powerfully central chain.\end{lem}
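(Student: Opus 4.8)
The plan is to verify directly that the two-step chain $\{1\}\le H_{1}\le H$, with $H_{1}=\Omega_{i}(G^{p^{2}})/\Omega_{i}(G^{p})^{p^{2}}$, is a powerfully central chain; granting that $H$ is powerful (Proposition \ref{prop:H is a powerful group}), this is exactly the assertion of the lemma, so the two things to check are $[H_{1},H]=\{1\}$ and $[H,H]\le H_{1}^{p}$. First I would confirm that $H_{1}$ is a genuine normal subgroup of $H$ lying between $\{1\}$ and $H$: we have $\Omega_{i}(G^{p^{2}})\le\Omega_{i}(G^{p})$ since $G^{p^{2}}\le G^{p}$, and $\Omega_{i}(G^{p})^{p^{2}}\le\Omega_{i}(G^{p^{2}})$ by Proposition \ref{prop:Omega_i(Gp)^j is in omega_=00007Bi-j=00007D}, which gives $\Omega_{i}(G^{p})^{p^{2}}\le\Omega_{i-2}(G^{p^{3}})\le\Omega_{i}(G^{p^{2}})$; and $\Omega_{i}(G^{p^{2}})$ is characteristic in $G$, so $H_{1}\trianglelefteq K$, hence $H_{1}\trianglelefteq H$ and $H_{1}^{p}\trianglelefteq H$.

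Both containments would be established using the two devices already exploited in this section: that every element of $G^{p^{k}}$ is a $p^{k}$-th power (as $G$ is powerful), and the order estimates of Theorem \ref{thm:Gustavos theorem}(ii), together with the standard commutator identities for powerful $p$-groups such as $[G^{p^{a}},G^{p^{b}}]\le[G,G]^{p^{a+b}}$ (see \cite{Lubotzky1987}). Since $\Omega_{i}(G^{p^{2}})$ is generated by the elements of $G^{p^{2}}$ of order at most $p^{i}$, the group $H_{1}$ is generated by the images of elements $x^{p^{2}}$ with $o(x)\le p^{i+2}$, and similarly $H$ is generated by the images of elements $a^{p}$ with $o(a)\le p^{i+1}$; since the conjugates of the commutators of generators generate the derived subgroup, it is enough to control $[x^{p^{2}},a^{p}]$ and $[a^{p},b^{p}]$ for such generators. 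For the first, Theorem \ref{thm:Gustavos theorem}(ii) applied with parameter $i+1$ gives $o([x^{p^{2}},a^{p}])\le p^{(i+1)-2-1}=p^{i-2}$, while $[x^{p^{2}},a^{p}]\in[G^{p^{2}},G^{p}]\le[G,G]^{p^{3}}\le G^{p^{4}}$; writing $[x^{p^{2}},a^{p}]=g^{p^{4}}$ forces $o(g)\le p^{i+2}$, so $g^{p^{2}}\in\Omega_{i}(G^{p^{2}})$ and $[x^{p^{2}},a^{p}]=(g^{p^{2}})^{p^{2}}\in\Omega_{i}(G^{p^{2}})^{p^{2}}\le\Omega_{i}(G^{p})^{p^{2}}$, i.e. it is trivial in $H$; this gives $[H_{1},H]=\{1\}$. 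For the second, Theorem \ref{thm:Gustavos theorem}(ii) with parameter $i+1$ gives $o([a^{p},b^{p}])\le p^{(i+1)-1-1}=p^{i-1}$, while $[a^{p},b^{p}]\in[G^{p},G^{p}]\le[G,G]^{p^{2}}\le G^{p^{3}}$; writing $[a^{p},b^{p}]=g^{p^{3}}$ forces $o(g)\le p^{i+2}$, so $g^{p^{2}}$ has order at most $p^{i}$, lies in $\Omega_{i}(G^{p^{2}})$, and $[a^{p},b^{p}]=(g^{p^{2}})^{p}$ becomes a $p$-th power of an element of $H_{1}$ in the quotient. As $H_{1}^{p}\trianglelefteq H$, this yields $[H,H]\le H_{1}^{p}$, so the chain is powerfully central and $H$ has powerful class at most $2$.

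The step I expect to be the real obstacle is the second containment, precisely because one must land the commutators in $H_{1}^{p}$ rather than merely in $H^{p}$ or in $H_{1}$: this needs the sharp order bound $o([a^{p},b^{p}])\le p^{i-1}$ (hence the particular choice of parameter in Theorem \ref{thm:Gustavos theorem}(ii)) together with the control on the order of the extracted $p^{3}$-rd root ensuring that its $p^{2}$-th power actually falls inside $\Omega_{i}(G^{p^{2}})$. The degenerate cases $i=1$ and $i=2$, where $\Omega_{i}(G^{p})^{p^{2}}$ collapses to $\{1\}$ (by Proposition \ref{prop:Omega_i(Gp)^j is in omega_=00007Bi-j=00007D} together with Theorem \ref{thm:Gustavos theorem}(iii)) and $H=\Omega_{i}(G^{p})$, should be recorded separately but present no difficulty, since the estimates above already force the relevant commutators to vanish in $G$ itself.
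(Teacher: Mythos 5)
Your proposal is correct and follows essentially the same route as the paper: the same two-term chain, the same order estimates from Theorem \ref{thm:Gustavos theorem}(ii) with parameter $i+1$, and the same trick of writing the commutators as $p^{4}$-th (resp.\ $p^{3}$-rd) powers whose roots have controlled order, landing them in $\Omega_{i}(G^{p})^{p^{2}}$ (resp.\ in $\Omega_{i}(G^{p^{2}})^{p}$). The only cosmetic difference is that you verify $[H,H]\leq\left(\frac{\Omega_{i}(G^{p^{2}})}{\Omega_{i}(G^{p})^{p^{2}}}\right)^{p}$ directly in $G$ on generator representatives (using normality of $H_{1}^{p}$), whereas the paper quotes the corresponding computation, carried out in $K$ via Lemma \ref{lem:elements of order p are central} and Proposition \ref{prop:Omega_i(Gp)^j is in omega_=00007Bi-j=00007D}, from the proof of Proposition \ref{prop:H is a powerful group}.
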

\begin{proof}
We will show that $H\geq\frac{\Omega_{i}(G^{p^{2}})}{\Omega_{i}(G^{p})^{p^{2}}}\geq1$
is a powerfully central chain. In the proof of Proposition \ref{prop:H is a powerful group}
we saw that $[H,H]\leq\left(\frac{\Omega_{i}(G^{p^{2}})}{\Omega_{i}(G^{p})^{p^{2}}}\right)^{p}$.
We now show that $\frac{\Omega_{i}(G^{p^{2}})}{\Omega_{i}(G^{p})^{p^{2}}}\leq Z(H)$,
to do this we will show that $[\Omega_{i}(G^{p^{2}}),\Omega_{i}(G^{p})]\leq\Omega_{i}(G^{p})^{p^{2}}$.
Consider $[g^{p^{2}},h^{p}]$ for $g,h\in G$ with $o(g)\leq p^{i+2}$
and $o(h)\leq p^{i+1}$. Using Theorem \ref{thm:Gustavos theorem}(ii)
we obtain that $o([g^{p^{2}},h^{p}])\leq p^{i-2}$. As $[g^{p^{2}},h^{p}]\in G^{p^{4}}$
we may write $[g^{p^{2}},h^{p}]=k^{p^{4}}$ for some $k\in G$. Then
$o(k^{p^{2}})\leq p^{i}$ and so $[g^{p^{2}},h^{p}]=(k^{p^{2}})^{p^{2}}\in\Omega_{i}(G^{p})^{p^{2}}$.
Thus $\frac{\Omega_{i}(G^{p^{2}})}{\Omega_{i}(G^{p})^{p^{2}}}\leq Z(H)$
. Hence it follows that $H\geq\frac{\Omega_{i}(G^{p^{2}})}{\Omega_{i}(G^{p})^{p^{2}}}\geq1$
is a powerfully central chain.
\end{proof}

Using Lemma \ref{lem:H-is-powerfully nilpotent of class at most 2}
and Proposition \ref{prop: G is pn iff G/Gp^2 is pn} one can obtain
a powerfully central chain for $\Omega_{i}(G^{p})$ of length $2i-1$.
However a shorter chain is possible. The following Lemma will be used
to reduce the length of the chain.
\begin{lem}
\label{lem:If x is in omega_i(Gp)pj+1 but not in omega_i(Gp2)pj+1 then x is not in G^pj+3 to shorten the chain}$[\Omega_{i}(G^{p^{2}})^{p^{j}},\Omega_{i}(G^{p})]\leq\Omega_{i}(G^{p^{2}})^{p^{j+2}}$
for $i\geq1$ and $j\geq0$.\end{lem}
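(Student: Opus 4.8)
The plan is to pass to the quotient by $N:=\Omega_{i}(G^{p^{2}})^{p^{j+2}}$, reduce the inclusion to a statement about commutators of natural generators, and then run an order-count of exactly the kind already used in Lemma~\ref{lem:H-is-powerfully nilpotent of class at most 2}. First I would note that $N$ is characteristic in $G$ (as $\Omega_{i}(G^{p^{2}})$ is), hence normal, and that $\Omega_{i}(G^{p^{2}})^{p^{j}}$ is generated by the $p^{j}$th powers $g^{p^{j}}$ with $g\in\Omega_{i}(G^{p^{2}})$, while $\Omega_{i}(G^{p})$ is generated by the elements $h\in G^{p}$ with $o(h)\leq p^{i}$. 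The reduction I have in mind: if one shows $[g^{p^{j}},h]\in N$ for every such pair, then in $G/N$ each $\overline{g^{p^{j}}}$ commutes with every generator of $\overline{\Omega_{i}(G^{p})}$, hence with all of $\overline{\Omega_{i}(G^{p})}$; as this holds for all $g$, the subgroup $\overline{\Omega_{i}(G^{p^{2}})^{p^{j}}}$ centralises $\overline{\Omega_{i}(G^{p})}$, which is exactly the asserted inclusion $[\Omega_{i}(G^{p^{2}})^{p^{j}},\Omega_{i}(G^{p})]\leq N$.

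It then remains to estimate $[g^{p^{j}},h]$ for a fixed $g\in\Omega_{i}(G^{p^{2}})$ and $h\in G^{p}$ with $o(h)\leq p^{i}$. Since $G$ is powerful, $g=c^{p^{2}}$ and $h=d^{p}$ for suitable $c,d\in G$, and because $\exp\Omega_{i}(G^{p^{2}})\leq p^{i}$ (Theorem~\ref{thm:Gustavos theorem}(iii) when $p$ is odd; Theorem~\ref{thm:Gustavos theorem}(iv) together with $\Omega_{i}(G^{4})\leq\Omega_{i}(G^{2})$ when $p=2$) we have $o(c)\leq p^{i+2}$ and $o(d)\leq p^{i+1}$. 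Applying Theorem~\ref{thm:Gustavos theorem}(ii), with $i+1$ in place of $i$, with $c,d$ in place of $x,y$, and with exponents $p^{j+2}$ and $p$, gives $o([g^{p^{j}},h])=o([c^{p^{j+2}},d^{p}])\leq p^{(i+1)-(j+2)-1}=p^{i-j-2}$; once $i\leq j+2$ this forces $[g^{p^{j}},h]=1$ and there is nothing to prove. On the other hand $g^{p^{j}}\in G^{p^{j+2}}$, so $[g^{p^{j}},h]\in[G^{p^{j+2}},G^{p}]\leq G^{p^{j+4}}$ by powerfulness (the same fact used tacitly in Lemma~\ref{lem:H-is-powerfully nilpotent of class at most 2}); writing $[g^{p^{j}},h]=e^{p^{j+4}}$ and comparing orders yields $o(e)\leq p^{i+2}$, hence $o(e^{p^{2}})\leq p^{i}$ with $e^{p^{2}}\in G^{p^{2}}$, so $e^{p^{2}}\in\Omega_{i}(G^{p^{2}})$ and $[g^{p^{j}},h]=(e^{p^{2}})^{p^{j+2}}\in N$.

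Most of this is routine given Theorem~\ref{thm:Gustavos theorem} and the standard facts that in a powerful group agemos are sets of powers and $[G^{p^{a}},G^{p^{b}}]\leq G^{p^{a+b+1}}$. The one place needing genuine care is the first paragraph's reduction from commutators of generators to the full commutator subgroup; I would make sure to phrase it through centralisers (the centraliser of a generating set equals the centraliser of the subgroup it generates), as above, rather than attempting to expand commutators of products directly.
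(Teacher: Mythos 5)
Your argument is correct and is essentially the paper's own proof: the same application of Theorem \ref{thm:Gustavos theorem}(ii) to bound $o([c^{p^{j+2}},d^{p}])$ by $p^{i-j-2}$, the same containment $[G^{p^{j+2}},G^{p}]\leq G^{p^{j+4}}$ coming from powerfulness, and the same rewriting of the commutator as $(e^{p^{2}})^{p^{j+2}}\in\Omega_{i}(G^{p^{2}})^{p^{j+2}}$. The only differences are cosmetic: the paper first invokes Proposition \ref{prop:Omega_i(Gp)^j is in omega_=00007Bi-j=00007D} to pass to $\Omega_{i-j}(G^{p^{2+j}})$, whereas you inline the needed exponent bound on $\Omega_{i}(G^{p^{2}})$ directly, and you make explicit (via centralisers) the reduction from generators to the full commutator subgroup, which the paper leaves implicit.
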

\begin{proof}
By Proposition \ref{prop:Omega_i(Gp)^j is in omega_=00007Bi-j=00007D}
we know that $[\Omega_{i}(G^{p^{2}})^{p^{j}},\Omega_{i}(G^{p})]\leq[\Omega_{i-j}(G^{p^{2+j}}),\Omega_{i}(G^{p})]$,
hence it suffices to show that $[\Omega_{i-j}(G^{p^{2+j}}),\Omega_{i}(G^{p})]\leq\Omega_{i}(G^{p^{2}})^{p^{j+2}}.$
Consider $g,h\in G$ with $o(g)\leq p^{i+2}$ and $o(h)\leq p^{i+1}$
then $g^{p^{2+j}}\in\Omega_{i-j}(G^{p^{2+j}})$ and $h^{p}\in\Omega_{i}(G^{p})$.
Using Theorem \ref{thm:Gustavos theorem}(ii) we obtain that $o([g^{p^{2+j}},h^{p}])\le p^{i-j-2}$.
Also notice that $[g^{p^{2+j}},h^{p}]\in G^{p^{4+j}}$, and hence
we may write $[g^{p^{2+j}},h^{p}]=k^{p^{4+j}}$ for some $k\in G$,
where $o(k^{p^{4+j}})\leq p^{i-j-2}$. It follows that $k^{p^{2}}\in\Omega_{i}(G^{p^{2}})$
and $[g^{p^{2+j}},h^{p}]=k^{p^{4+j}}\in\Omega_{i}(G^{p^{2}})^{p^{j+2}}$.
Hence $[\Omega_{i-j}(G^{p^{2+j}}),\Omega_{i}(G^{p})]\leq\Omega_{i}(G^{p^{2}})^{p^{j+2}}.$
\end{proof}
Note that if $j>i$ in the above, the inclusion still holds, with
both sides of the inequality being equal to the trivial group.

\begin{thm}
\label{thm:omega(g^p) p odd is pn of class at most 2i-2}If $G$ is
a powerful $p$-group where $p$ is an odd prime, then $\Omega_{i}(G^{p})$
is powerfully nilpotent for all $i\geq1$ and the powerful nilpotency
class of $\Omega_{i}(G^{p})$ is at most $i$.\end{thm}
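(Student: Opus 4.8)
The plan is to exhibit, for $L:=\Omega_i(G^p)$, an explicit powerfully central chain of length $i$. Since $p$ is odd, the existence of any such chain already forces $[L,L]\le L^p$, so $L$ will automatically be powerful and no separate check of powerfulness is needed. Two facts will be used repeatedly: $\Omega_i(G^{p^2})\le\Omega_i(G^p)=L$, since an element of $G^{p^2}$ of order at most $p^i$ lies in $G^p$; and $\exp\Omega_i(G^{p^2})\le\exp L\le p^i$ by Theorem \ref{thm:Gustavos theorem}(iii), so that $\Omega_i(G^{p^2})^{p^{i}}=\{1\}$ and, by Proposition \ref{prop:Omega_i(Gp)^j is in omega_=00007Bi-j=00007D}, $\exp(\Omega_i(G^{p^2})^{p^{j}})\le p^{i-j}$. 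The chain I propose is
\[
\{1\}\leq\Omega_i(G^{p^2})^{p^{i-2}}\leq\Omega_i(G^{p^2})^{p^{i-3}}\leq\cdots\leq\Omega_i(G^{p^2})^{p}\leq\Omega_i(G^{p^2})\leq L,
\]
that is, $C_0=\{1\}$, $C_\ell=\Omega_i(G^{p^2})^{p^{i-1-\ell}}$ for $1\le\ell\le i-1$, and $C_i=L$. It has exactly $i$ links, it consists of subgroups characteristic in $G$ (hence normal in $L$), and it is genuinely ascending by the first fact above; for small $i$ it collapses in the obvious way, and for $i=1$ it is simply $\{1\}\le L$.

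The remaining work is to verify $[C_\ell,L]\le C_{\ell-1}^{p}$ for $\ell=1,\dots,i$. For the interior links $2\le\ell\le i-1$, writing $j=i-1-\ell\ge0$, this is exactly the content of Lemma \ref{lem:If x is in omega_i(Gp)pj+1 but not in omega_i(Gp2)pj+1 then x is not in G^pj+3 to shorten the chain}, which gives $[\Omega_i(G^{p^2})^{p^{j}},L]\le\Omega_i(G^{p^2})^{p^{j+2}}=(\Omega_i(G^{p^2})^{p^{j+1}})^{p}=C_{\ell-1}^{p}$. For the top link, $[C_i,L]=[L,L]$, and the computation carried out in the proof of Proposition \ref{prop:H is a powerful group} shows $[H,H]\le(\Omega_i(G^{p^2})/L^{p^2})^{p}$, where $H=L/L^{p^2}$ as before; pulling this back to $L$ and using $L^{p^2}\le\Omega_i(G^{p^2})^{p}$ (itself a quick consequence of Proposition \ref{prop:Omega_i(Gp)^j is in omega_=00007Bi-j=00007D}) gives $[L,L]\le\Omega_i(G^{p^2})^{p}=C_{i-1}^{p}$; alternatively one reproves this directly from Theorem \ref{thm:Gustavos theorem}(ii). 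For the bottom link with $i\ge2$, Lemma \ref{lem:If x is in omega_i(Gp)pj+1 but not in omega_i(Gp2)pj+1 then x is not in G^pj+3 to shorten the chain} with $j=i-2$ gives $[C_1,L]=[\Omega_i(G^{p^2})^{p^{i-2}},L]\le\Omega_i(G^{p^2})^{p^{i}}=\{1\}=C_0^{p}$; and when $i=1$ the single link needs only $[L,L]\le\Omega_1(G^{p^2})^{p}=\{1\}$ (as $\exp\Omega_1(G^{p^2})\le p$), i.e.\ that $L$ is abelian. This produces a powerfully central chain of length $i$, establishing the claimed bound on the powerful nilpotency class of $\Omega_i(G^p)$.

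The one genuinely delicate point --- and the reason the bound is $i$ rather than $i+1$ --- is the bottom of the chain. Lifting the length-$2$ powerfully central chain of $L/L^{p^2}$ supplied by Lemma \ref{lem:H-is-powerfully nilpotent of class at most 2} via Remark \ref{rem: lifting up to a pc chain for G from prop 1.1} produces the ``staircase'' $L\ge\Omega_i(G^{p^2})\ge L^{p}\ge\Omega_i(G^{p^2})^{p}\ge L^{p^2}\ge\cdots$ of length roughly $2i-1$; even after discarding the superfluous agemo terms $L^{p^{j}}$ one is still left with $i+1$ links, and the step that is saved is the observation that $[\Omega_i(G^{p^2})^{p^{i-2}},L]$ lands not merely in $\Omega_i(G^{p^2})^{p^{i-1}}$ but all the way inside $\Omega_i(G^{p^2})^{p^{i}}=\{1\}$ --- which is precisely where $\exp\Omega_i(G^{p^2})\le p^i$ is used. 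Keeping this bookkeeping straight, and checking the degeneracies at $i=1$ (and $i=2$), is the main thing to watch; the rest is an assembly of Lemma \ref{lem:If x is in omega_i(Gp)pj+1 but not in omega_i(Gp2)pj+1 then x is not in G^pj+3 to shorten the chain}, Proposition \ref{prop:Omega_i(Gp)^j is in omega_=00007Bi-j=00007D}, and the computation already done in Proposition \ref{prop:H is a powerful group}.
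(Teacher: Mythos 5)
Your proposal is correct and is essentially the paper's own argument: the chain $1\leq\Omega_{i}(G^{p^{2}})^{p^{i-2}}\leq\cdots\leq\Omega_{i}(G^{p^{2}})\leq\Omega_{i}(G^{p})$ is exactly the chain the paper ends up with, and your link verifications use the same ingredients (Lemma \ref{lem:If x is in omega_i(Gp)pj+1 but not in omega_i(Gp2)pj+1 then x is not in G^pj+3 to shorten the chain} plus $\exp\Omega_{i}(G^{p^{2}})\leq p^{i}$ for the interior and bottom links, and the commutator computation of Proposition \ref{prop:H is a powerful group} for the top link). The only difference is organisational: you verify the powerfully central conditions directly, rather than lifting the length-two chain of $H$ via Remark \ref{rem: lifting up to a pc chain for G from prop 1.1} and then pruning the redundant terms $\Omega_{i}(G^{p})^{p^{j}}$, which also lets you avoid the paper's case split on the exponent.
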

\begin{proof}
As we observed above, for $i=1$ the group is abelian, thus we may
assume $i\geq2$. Note that if $p^{e}=\exp(\Omega_{i}(G^{p}))<p^{2}$
then by Lemma \ref{lem:elements of order p are central} it follows
the group is abelian and so of powerful class $1$ and so the claim
holds in this case. If $\exp(\Omega_{i}(G^{p}))=p^{2}$ then $H\cong\Omega_{i}(G^{p})$
and so the claim follows by Lemma \ref{lem:H-is-powerfully nilpotent of class at most 2}.
Thus we may assume that $e>2$ and $i\geq2$. In Lemma \ref{lem:H-is-powerfully nilpotent of class at most 2}
we saw that $H=\frac{\Omega_{i}(G^{p})}{(\Omega_{i}(G^{p}))^{p^{2}}}$
has a powerfully central chain $H\geq\frac{\Omega_{i}(G^{p^{2}})}{\Omega_{i}(G^{p})^{p^{2}}}\geq1$.
Then by Remark \ref{rem: lifting up to a pc chain for G from prop 1.1}
we have the following powerfully central chain for $\Omega_{i}(G^{p})$:

\begin{align*}
 & \Omega_{i}(G^{p}) &  & \geq\Omega_{i}(G^{p^{2}}) &  & \geq\Omega_{i}(G^{p})^{p}\\
 & \Omega_{i}(G^{p})^{p} &  & \geq\Omega_{i}(G^{p^{2}})^{p} &  & \geq\Omega_{i}(G^{p})^{p^{2}}\\
 &  &  & \vdots\\
 & \Omega_{i}(G^{p})^{p^{e-2}} &  & \geq\Omega_{i}(G^{p^{2}})^{p^{e-2}} &  & \geq1
\end{align*}

Now using Lemma \ref{lem:If x is in omega_i(Gp)pj+1 but not in omega_i(Gp2)pj+1 then x is not in G^pj+3 to shorten the chain}
we see that the terms $\Omega_{i}(G^{p})^{p^{j}}$ for $j\in\{1,\dots,e-2\}$
are redundant. Noting that by Theorem \ref{thm:Gustavos theorem}(iii)
we have that $\exp\Omega_{i}(G^{p})\leq p^{i}$, we obtain the following
powerfully central chain for $\Omega_{i}(G^{p})$ of length at most
$i$. 

\[
\Omega_{i}(G^{p})\geq\Omega_{i}(G^{p^{2}})\geq\Omega_{i}(G^{p^{2}})^{p}\geq\dots\geq\Omega_{i}(G^{p^{2}})^{p^{i-2}}\geq1.
\]

\end{proof}
Later we shall see an example where this bound is attained. Recall
that for a powerful $p$-group $G$, we have that $G^{p^{j}}$ is
powerful for all $j\geq0$. Given a powerful group $G$, applying
Theorem \ref{thm:omega(g^p) p odd is pn of class at most 2i-2} to
$G^{p^{j}}$ gives that $\Omega_{i}(G^{p^{j+1}})$ is powerfully nilpotent
for all $i\geq1$. Thus we have that for a powerful $p$-group $G$,
where $p$ is an odd prime, all Omega subgroups of the proper Agemo
subgroups are powerfully nilpotent. 
\begin{thm}
\label{thm: main result for p odd}Let $G$ be a powerful $p$-group
for an odd prime $p$. Then $\Omega_{i}(G^{p^{j}})$ is powerfully
nilpotent for $i,j\geq1$. The powerful nilpotency class of $\Omega_{i}(G^{p^{j}})$
is at most $i$.
\end{thm}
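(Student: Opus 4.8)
The plan is to deduce this directly from Theorem~\ref{thm:omega(g^p) p odd is pn of class at most 2i-2} by exploiting the fact that its hypotheses are inherited by the proper Agemo subgroups of $G$. Fix $j\geq1$ and put $N=G^{p^{j-1}}$. Since $G$ is powerful, $N$ is again a powerful $p$-group (Proposition~1.7 of \cite{Lubotzky1987}), and because the Agemo subgroups of a powerful group coincide with the corresponding sets of powers we have
\[
N^{p}=\bigl(G^{p^{j-1}}\bigr)^{p}=\{(g^{p^{j-1}})^{p}:g\in G\}=\{g^{p^{j}}:g\in G\}=G^{p^{j}}.
\]
Hence $\Omega_{i}(G^{p^{j}})=\Omega_{i}(N^{p})$.

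First I would apply Theorem~\ref{thm:omega(g^p) p odd is pn of class at most 2i-2} with $G$ replaced by the powerful $p$-group $N$. This immediately gives that $\Omega_{i}(N^{p})$ is powerfully nilpotent for every $i\geq1$, of powerful nilpotency class at most $i$. By the identification $\Omega_{i}(N^{p})=\Omega_{i}(G^{p^{j}})$ above, this is exactly the asserted statement. The case $j=1$ is literally Theorem~\ref{thm:omega(g^p) p odd is pn of class at most 2i-2}, so nothing extra is required there.

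There is essentially no obstacle in this argument: all of the work has been done in the chain of lemmas leading up to Theorem~\ref{thm:omega(g^p) p odd is pn of class at most 2i-2}, and the only facts needed for the reduction are that powerfulness is inherited by $G^{p^{j-1}}$ and that $(G^{p^{j-1}})^{p}=G^{p^{j}}$, both standard properties of powerful $p$-groups recalled in the introduction. The one point I would flag explicitly is that the bound on the powerful nilpotency class does not worsen under this reduction: it remains at $i$, uniformly in $j$, precisely because the bound in Theorem~\ref{thm:omega(g^p) p odd is pn of class at most 2i-2} depends only on $i$ and not on the ambient powerful group.
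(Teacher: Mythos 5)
Your proposal is correct and follows essentially the same route as the paper: the paper also deduces the theorem by applying Theorem~\ref{thm:omega(g^p) p odd is pn of class at most 2i-2} to the powerful subgroup $G^{p^{j-1}}$, using that Agemo subgroups of a powerful group are powerful and that $(G^{p^{j-1}})^{p}=G^{p^{j}}$, so the class bound $i$ carries over unchanged.
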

We now turn to the case $p=2$. Due to the modification in the definition
of a powerful $2$-group, that is the requirement that $G^{\prime}\leq G^{2^{2}}$,
the arguments used above would require us to show that the group $H$
is abelian. However, this is not true in general. Below we exhibit
an example of a powerful $2$-group such that $\Omega_{2}(G^{2})$
is not powerful, and so we see that Theorem \ref{thm:omega(g^p) p odd is pn of class at most 2i-2}
cannot hold in its current form for $p=2$.
\begin{example}
\label{exa:badly behaved example}Consider the $2-$group 
\[
G=\langle a,b,c|a^{2^{3}}=1,b^{2^{3}}=1,c^{2^{5}}=1,[a,c]=1,[b,c]=1,[a,b]=c^{2^{2}}\rangle.
\]
One can check either by hand or with GAP \cite{GAP4}, that this is
a consistent presentation defining a group of order $2^{11}$. Clearly
$G$ is powerful and so $G^{2}=\langle a^{2},b^{2},c^{2}\rangle$.
Consider $\Omega_{2}(G^{2})$; this subgroup contains everything in
$G^{2}$ of order less than or equal to $4$. In particular it contains
$a^{2}$, $b^{2}$ and $c^{2^{3}}$. Notice $[a^{2},b^{2}]=c^{2^{4}}$.
Hence $\Omega_{2}(G^{2})$ is not abelian, but then it cannot be powerful
for it has exponent at most $4$ (Theorem \ref{thm:Gustavos theorem}(iv))
and any powerful group of exponent at most $4$ is abelian.
\end{example}
Also note that in the example above, the prime $p=2$ can be replaced
with any odd prime $p$ to give a consistent presentation for a powerfully nilpotent
group of order $p^{11}$, where the property still holds that $\Omega_{2}(G^{p})$
is not abelian. Thus in particular $\Omega_{2}(G^{p})$ is not strongly
powerful, yet is still powerfully nilpotent. Thus for $p$ odd we
see that the subgroups $\Omega_{i}(G^{p})$ are an example of characteristic
subgroups of a powerful group $G$ which are powerfully nilpotent
but not necessarily strongly powerful. This is in contrast to the
subgroups $G^{p^{i}}$ for $i\geq1$, and the proper terms of the
derived and lower central series of $G$, which are all strongly powerful
\cite{Traustason2018}. Furthermore observe that $\Omega_{2}(G^{p})$
has powerful nilpotency class $2$ and so the bound from Theorem \ref{thm:omega(g^p) p odd is pn of class at most 2i-2}
is attained. 

For the case $p=2$ we make the following modification - instead of
looking at $\Omega_{i}(G^{p})$ we look at $\Omega_{i}(G^{p^{2}})$. 
\begin{thm}
\label{thm:2 group omega(g^4) is pn}If $G$ is a powerful $2-$group,
then $\Omega_{i}(G^{4})$ is powerfully nilpotent for all $i\geq1$
and furthermore for $i>1$ the powerful nilpotency class of $\Omega_{i}(G^{4})$
is at most $i-1$, for $i=1$ the powerful class is $1$.\end{thm}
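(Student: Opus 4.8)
The plan is to mimic the odd-prime argument behind Theorem~\ref{thm:omega(g^p) p odd is pn of class at most 2i-2} while exploiting the extra factor of $2$ built into $G^{4}$: the computation that in the odd case only gave powerfulness of $\Omega_{i}(G^{p})$ will here give that $\Omega_{i}(G^{4})$ is \emph{strongly} powerful, and a strongly powerful group is powerfully nilpotent (see the Preliminaries). The bound on the class will then follow from Proposition~\ref{prop: G is pn iff G/Gp^2 is pn}.

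First I would dispose of the small cases. Every generator of $\Omega_{i}(G^{4})$ is an element of $G^{2}$ of order at most $2^{i}$, so $\Omega_{i}(G^{4})\le\Omega_{i}(G^{2})$ and hence $\exp\Omega_{i}(G^{4})\le 2^{i}$ by Theorem~\ref{thm:Gustavos theorem}(iv). In particular $\Omega_{1}(G^{4})$ has exponent at most $2$, so it is elementary abelian and therefore powerful of powerful class at most $1$ (equal to $1$ when it is nontrivial). More generally, if $\exp\Omega_{i}(G^{4})\le 2$ the group is abelian, with powerful class at most $1\le i-1$ for $i\ge 2$. So from here on assume $i\ge 2$ and write $\exp\Omega_{i}(G^{4})=2^{e}$ with $2\le e\le i$.

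The main step is to prove $[\Omega_{i}(G^{4}),\Omega_{i}(G^{4})]\le\Omega_{i}(G^{4})^{4}$. Since $\Omega_{i}(G^{4})^{4}$ is characteristic in $G$, it suffices to check the inclusion for commutators of generators, that is, for $[a^{4},b^{4}]$ with $a,b\in G$ and $o(a^{4}),o(b^{4})\le 2^{i}$, where we use that every element of the Agemo subgroup $G^{4}$ is a fourth power because $G$ is powerful. From $o(a^{4})\le 2^{i}$ we get $o(a)\le 2^{i+2}$, and similarly $o(b)\le 2^{i+2}$, so Theorem~\ref{thm:Gustavos theorem}(ii) with parameter $i+2$ and $j=k=2$ gives $o([a^{4},b^{4}])\le 2^{i-2}$. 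Next, $[a^{4},b^{4}]\in[G^{4},G^{4}]\le\gamma_{2}(G^{4})\le(G^{4})^{4}=G^{2^{4}}$, where the second inclusion holds because $G^{4}$ is itself a powerful $2$-group; so we may write $[a^{4},b^{4}]=k^{2^{4}}$ for some $k\in G$, and then $o(k)\le 2^{i+2}$, hence $o(k^{4})\le 2^{i}$. As $k^{4}\in G^{4}$ has order at most $2^{i}$, it lies in $\Omega_{i}(G^{4})$, and therefore $[a^{4},b^{4}]=(k^{4})^{4}\in\Omega_{i}(G^{4})^{4}$. This proves the inclusion, so $\Omega_{i}(G^{4})$ is strongly powerful and, in particular, powerfully nilpotent.

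Finally, the class bound. The inclusion just proved says precisely that $\Omega_{i}(G^{4})/\Omega_{i}(G^{4})^{4}$ is abelian, hence powerfully nilpotent of powerful class at most $1$; it is moreover nontrivial, since $\Omega_{i}(G^{4})=\Omega_{i}(G^{4})^{4}$ would force $\Omega_{i}(G^{4})=1$, contrary to $e\ge 2$. Applying Proposition~\ref{prop: G is pn iff G/Gp^2 is pn} with $p=2$ to $\Omega_{i}(G^{4})$, whose exponent is $2^{e}$ with $2\le e\le i$, gives that the powerful class of $\Omega_{i}(G^{4})$ is at most $(e-1)\cdot 1=e-1\le i-1$. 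The one place calling for care is the commutator bookkeeping in the previous paragraph: one must place $[G^{4},G^{4}]$ inside $G^{2^{m}}$ with $m\ge 4$, which is why it is crucial to use $\gamma_{2}(K)\le K^{4}$ for $K=G^{4}$ rather than the weaker bound obtained from $G$ or $G^{2}$ (which would leave only a square, not a fourth power). The genuinely new feature compared with the odd case is exactly that this extra power of $2$ upgrades the conclusion from ``powerful'' to ``strongly powerful''.
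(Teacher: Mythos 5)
Your proof is correct and takes essentially the same route as the paper: both arguments show that $\Omega_{i}(G^{4})/\Omega_{i}(G^{4})^{4}$ is abelian and then combine Proposition \ref{prop: G is pn iff G/Gp^2 is pn} with the exponent bound $e\leq i$ coming from Theorem \ref{thm:Gustavos theorem}(iv). The only real difference is in execution: you verify $[\Omega_{i}(G^{4}),\Omega_{i}(G^{4})]\leq\Omega_{i}(G^{4})^{4}$ directly in $G$ by an order computation (writing each commutator of generators as a $2^{4}$th power of an element whose fourth power lies in $\Omega_{i}(G^{4})$), whereas the paper passes to the quotient $G/\Omega_{i}(G^{4})^{4}$ and uses Lemma \ref{lem:elements of order p are central} together with Theorem \ref{thm:Gustavos theorem}(ii) there.
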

\begin{proof}
Consider $\tilde{H}=\frac{\Omega_{i}(G^{4})}{(\Omega_{i}(G^{4}))^{4}}$,
we will show that $\tilde{H}$ is abelian. By Lemma \ref{lem:elements of order p are central}
we only need to consider commutators between elements of order $4$.
Let $\tilde{K}=G/(\Omega_{i}(G^{4}))^{4}$, and notice that $\tilde{K}$
and $\tilde{K}^{4}$ are powerful and $\tilde{H}\leq\tilde{K}^{4}$.
We only need to consider commutators of the form $[a^{4},b^{4}]$
where $o(a)=2^{4}$ and $o(b)=2^{4}$. However then by Theorem \ref{thm:Gustavos theorem}(ii),
setting $i=4$ yields that $o([a^{2^{2}},b^{2^{2}}])\leq p^{4-2-2}$
and thus the commutator is trivial. It follows that $\tilde{H}$ is
abelian. Suppose that $\exp(\Omega_{i}(G^{4}))=p^{e}$. If $e=1$
then $\Omega_{i}(G^{4})$ is abelian and so of powerful nilpotency
class $1$, otherwise by Proposition \ref{prop: G is pn iff G/Gp^2 is pn}
the powerful class of $\Omega_{i}(G^{4})$ is at most $e-1$. Since
$\Omega_{i}(G^{p^{2}})\leq\Omega_{i}(G^{p})$, by Theorem \ref{thm:Gustavos theorem}(iv)
we obtain that $e\leq i$ and so the result follows.
\end{proof}
As in the odd case, we can apply the above result to $G^{2^{j}}$
to obtain the following.
\begin{thm}
\label{thm:main result for p=00003D2}Let $G$ be a powerful $2$
group, then $\Omega_{i}(G^{2^{j}})$ is powerfully nilpotent for all
$i\geq1,$ $j\geq2$. Furthermore for $i>1$ the powerful nilpotency
class of $\Omega_{i}(G^{2^{j}})$ is at most $i-1$. For $i=1$ the
powerful nilpotency class is $1$.
\end{thm}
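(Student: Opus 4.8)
The plan is to deduce Theorem \ref{thm:main result for p=00003D2} directly from Theorem \ref{thm:2 group omega(g^4) is pn} by applying it to an appropriate Agemo subgroup, exactly as was done for the odd-prime case. The key observation is that for a powerful $2$-group $G$ and any $k\geq 0$, the Agemo subgroup $G^{2^{k}}$ is again a powerful $2$-group (by \cite[Corollary 1.2, Proposition 1.7]{Lubotzky1987}), and moreover $(G^{2^{k}})^{4} = (G^{2^{k}})^{2^{2}} = G^{2^{k+2}}$, again because in a powerful $2$-group every element of $G^{2^{k}}$ is a $2^{k}$-th power, so iterated Agemo subgroups behave multiplicatively on exponents.

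First I would fix $j\geq 2$ and set $k = j-2\geq 0$, so that $G^{2^{j}} = (G^{2^{k}})^{4}$. Applying Theorem \ref{thm:2 group omega(g^4) is pn} with the powerful $2$-group $G^{2^{k}}$ in place of $G$ yields that $\Omega_{i}((G^{2^{k}})^{4}) = \Omega_{i}(G^{2^{j}})$ is powerfully nilpotent for every $i\geq 1$, with powerful nilpotency class at most $i-1$ when $i>1$ and exactly $1$ when $i=1$. This is precisely the assertion of the theorem, so the proof reduces to this single substitution together with the remark that $(G^{2^{k}})^{4} = G^{2^{k+2}}$.

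There is essentially no obstacle here: the work has all been done in Theorem \ref{thm:2 group omega(g^4) is pn}, and the only point requiring a word of care is verifying the identity $(G^{2^{k}})^{2^{2}} = G^{2^{k+2}}$ for powerful $2$-groups. This follows from the fact, recalled in the introduction, that in a powerful $p$-group the Agemo subgroup $G^{p^{m}}$ coincides with the set of $p^{m}$-th powers: applying this twice, every element of $(G^{2^{k}})^{4}$ is a fourth power of an element which is itself a $2^{k}$-th power, hence a $2^{k+2}$-th power, and conversely every $2^{k+2}$-th power lies in $(G^{2^{k}})^{4}$. Thus the statement follows, and the proof is a one-line reduction. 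I would write simply: ``Apply Theorem \ref{thm:2 group omega(g^4) is pn} to the powerful $2$-group $G^{2^{j-2}}$, noting that $(G^{2^{j-2}})^{4} = G^{2^{j}}$.''
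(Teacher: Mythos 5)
Your reduction is exactly the paper's argument: the paper proves the theorem by the one-line observation that Theorem \ref{thm:2 group omega(g^4) is pn} applied to the powerful $2$-group $G^{2^{j-2}}$ (equivalently, to the Agemo subgroups of $G$) yields the result, using that $(G^{2^{j-2}})^{4}=G^{2^{j}}$ since Agemo subgroups of powerful groups consist of the corresponding powers. Your write-up is correct and, if anything, spells out the identity $(G^{2^{k}})^{2^{2}}=G^{2^{k+2}}$ more explicitly than the paper does.
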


\section*{Acknowledgments}

I would like to thank Dr Gunnar Traustason and Dr Gareth Tracey for
their advice and encouragement with this paper. I am also thankful
for the suggestions of an anonymous referee leading to improved bounds
in Theorem \ref{thm:omega(g^p) p odd is pn of class at most 2i-2}.
I am grateful to the EPSRC for their financial support (grant number
1652316).

\bibliographystyle{plain}
\bibliography{referenceDatabase}

\begin{thebibliography}{1}

\bibitem{Dixon2003}
J.D. Dixon, M.P.F. Du~Sautoy, A.~Mann, and D.~Segal.
\newblock {\em Analytic Pro-P Groups}.
\newblock Cambridge Studies in Advanced Mathematics. Cambridge University
  Press, 2003.

\bibitem{Fernandez-Alcober2007}
Gustavo~A. Fern{\'a}ndez-Alcober.
\newblock Omega subgroups of powerful p-groups.
\newblock {\em Israel Journal of Mathematics}, 162(1):75--79, Dec 2007.

\bibitem{GAP4}
The GAP~Group.
\newblock {\em {GAP -- Groups, Algorithms, and Programming, Version 4.9.1}},
  2018.

\bibitem{GONZALEZSANCHEZ2004193}
J.~González-Sánchez and A.~Jaikin-Zapirain.
\newblock On the structure of normal subgroups of potent p-groups.
\newblock {\em Journal of Algebra}, 276(1):193 -- 209, 2004.

\bibitem{Lubotzky1987}
Alexander Lubotzky and Avinoam Mann.
\newblock Powerful p-groups. i. finite groups.
\newblock {\em Journal of Algebra}, 105(2):484 -- 505, 1987.

\bibitem{article}
Avinoam Mann.
\newblock Some questions about p-groups.
\newblock {\em Journal of the Australian Mathematical Society}, 67:356 -- 379,
  12 1999.

\bibitem{Mann2003}
Avinoam Mann and Fania Posnick-Fradkin.
\newblock Subgroups of powerful groups.
\newblock {\em Israel Journal of Mathematics}, 138(1):19--28, Mar 2003.

\bibitem{Traustason2018}
Gunnar Traustason and James Williams.
\newblock Powerfully nilpotent groups.
\newblock {\em Journal of Algebra}, 522:80 -- 100, 2019.

\end{thebibliography}

\end{document}